\documentclass[12pt]{amsart}
\usepackage{amscd,amsmath,amssymb,amsfonts}
\usepackage{dsfont}
\usepackage[cmtip, all]{xy}

\usepackage{graphicx,amssymb,amsmath,amsfonts,amsthm,amscd,hyperref,textcomp,relsize,colortbl}
\usepackage{tikz-cd}
\usetikzlibrary{babel}

\theoremstyle{plain}
\newtheorem{thm}{Theorem}
\newtheorem{lem}[thm]{Lemma}
\newtheorem{cor}[thm]{Corollary}
\newtheorem{prop}[thm]{Proposition}

\theoremstyle{definition}

\newtheorem{rmk}[thm]{Remark}

\numberwithin{thm}{section}
\numberwithin{equation}{section}

\newcommand{\Gal}{{\rm Gal}}

\newcommand{\Trace}{{\rm Trace}}


\newcommand{\sF}{{\mathcal F}}
\newcommand{\sG}{{\mathcal G}}
\newcommand{\sH}{{\mathcal H}}

\newcommand{\sK}{{\mathcal K}}
\newcommand{\sL}{{\mathcal L}}

\newcommand{\A}{{\mathbb A}}

\newcommand{\C}{{\mathbb C}}

\newcommand{\F}{{\mathbb F}}
\newcommand{\G}{{\mathbb G}}

\newcommand{\Q}{{\mathbb Q}}

\newcommand{\Z}{{\mathbb Z}}

\newcommand{\triv}{{\mathds{1}}}

\newcommand{\FF}{\mathcal F}

\newcommand{\NN}{\mathbb N}

\begin{document}
\title{A rigid local system with monodromy group $2.J_2$}
\author{Nicholas M. Katz and Antonio Rojas-Le\'{o}n}
\address{Princeton University, Mathematics, Fine Hall, NJ 08544-1000, USA }
\email{nmk@math.princeton.edu}
\address{Departamento de \'{A}lgebra, Universidad de Sevilla, c/Tarfia, s/n, 41012 Sevilla - SPAIN}
\email{arojas@us.es}
\thanks{The second author was partially supported by MTM2016-75027-P (Ministerio de Econom\'{\i}a y Competitividad) and FEDER}

\maketitle

\begin{abstract} 
We exhibit a rigid local system of rank six on the affine line in characteristic $p=5$ whose arithmetic and geometric monodromy groups are
the finite group $2.J_2$ ($J_2$ the Hall-Janko sporadic group) in one of its two (Galois-conjugate) irreducible representation of degree six.
\end{abstract}

\tableofcontents

\section{Introduction: the general setting}
We fix a prime number $p$, a prime number $\ell \neq p$, and a nontrivial $\overline{\Q_\ell}^\times$-valued additive character $\psi$ of $\F_p$. For $k/\F_p$ a finite extension, we denote by  $\psi_k$ the nontrivial additive character of $k$ given by $\psi_k :=\psi\circ \Trace_{k/\F_p}$. In perhaps more down to earth terms, we fix a nontrivial $\Q(\mu_p)^\times$-valued additive character $\psi$ of $\F_p$, and a field embedding of
$\Q(\mu_p)$ into  $\overline{\Q_\ell}$ for some $\ell \neq p$.

Given an integer $D \ge 3$ which is prime to $p$, we form the local system $\sF_{p,D}$ on $\A^1/\F_p$ whose trace function, at $k$-valued points $t \in \A^1(k) = k$, is given by
$$t \mapsto -\sum_{x \in k}\psi_k(x^D + tx).$$
This is a geometrically irreducible rigid local system, being the Fourier Transform of the rank one local system $\sL_{\psi(x^D)}$. It has rank $D-1$, and each of its $D-1$ $I(\infty)$-slopes is $D/(D-1)$. It is pure of weight one.  [It is the local system  $\sF(\F_p,\psi, \triv, D)$ of \cite{Ka-RLSA}.]

Let us further fix a choice of $\sqrt{p} \in \overline{\Q_\ell}^\times$. For each finite extension $k/\F_p$, we then use this choice of$\sqrt{p}$ to define $\sqrt{\#k} := \sqrt{p}^{\deg(k/\F_p)}$. We then define the ``half"-Tate twisted local system
$$\sG_{p,D} := \sF_{p,D}(1/2)$$
whose trace function, at  at $k$-valued points $t \in \A^1(k) = k$, is given by
$$t \mapsto -\sum_{x \in k}\psi_k(x^D + tx)/\sqrt{\#k}$$

The local system $\sG_{p,D}$ is pure of weight zero. 
\begin{lem} The determinant $\det(\sG_{p,D})$ is arithmetically of finite order. More precisely,  $\det(\sG_{p,D})^{\otimes 4p}$ is arithmetically trivial.
\end{lem}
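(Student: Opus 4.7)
The plan is to show first that $\det(\sG_{p,D})$ is geometrically trivial, so that it reduces to a scalar character of $\Gal(\bar\F_p/\F_p)$, and then to compute the value $\beta$ of this character at Frobenius precisely enough to conclude $\beta^{4p}=1$.

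For geometric triviality, since $\det(\sG_{p,D})$ differs from $\det(\sF_{p,D})$ only by a Tate twist, it suffices to treat the latter. It is rank-one and lisse on $\A^1/\bar\F_p$, so geometrically trivial if and only if tame at $\infty$ (the tame $\pi_1$ of $\P^1$ minus one point vanishes). By Laumon's local stationary phase applied to $\sF_{p,D}=\mathrm{FT}(\sL_{\psi(x^D)})$, after pulling back by the $(D-1)$-fold cover $s^{D-1}=-t/D$, the $I(\infty)$-representation decomposes as $\bigoplus_{\zeta^{D-1}=1}\sL_{\psi(c_\zeta)}\otimes\chi_\zeta$, where $c_\zeta=\tfrac{D-1}{D}\zeta s t$ is the critical value of $x^D+tx$ at the critical point $x=\zeta s$ and $\chi_\zeta$ is a tame character. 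Taking determinants sums the Artin--Schreier exponents; since $\sum_{\zeta^{D-1}=1}\zeta=0$ for $D\ge 3$, one has $\sum_\zeta c_\zeta=0$, and therefore $\det(\sF_{p,D})|_{I(\infty)}$ is tame.

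With geometric triviality, $\det(\sG_{p,D})$ is the scalar character sending $\mathrm{Frob}_p$ to $\beta=\alpha/\sqrt{p}^{D-1}$ where $\alpha=\det(\mathrm{Frob}_p\mid\sF_{p,D}|_{t=0})$. I will compute $\alpha$ via the stalk at $t=0$. The exact sequence $0\to\bar\Q_\ell\to H^1_c(\G_m,\sL_{\psi(x^D)})\to H^1_c(\A^1,\sL_{\psi(x^D)})\to 0$ coming from $\G_m\subset\A^1$ (with the left term being the trivial-character summand killed in the quotient), together with the geometric Kummer decomposition $H^1_c(\G_m,\sL_{\psi(x^D)})=\bigoplus_{\chi^D=1}H^1_c(\G_m,\sL_\chi\otimes\sL_\psi)$ whose nontrivial summands have Frobenius eigenvalue $-g(\chi,\psi)$, expresses $\alpha$ as a product, over Frobenius orbits $[\chi]$ of nontrivial characters of order dividing $D$, of $(-1)^k g(\chi,\psi_{\F_{p^k}})$ for each orbit of size $k$ (the determinant of the cyclic-shift Frobenius action on that $k$-dimensional summand, using that all the $k$ Frobenius conjugates yield the same Gauss sum on $\F_{p^k}$).

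The last step invokes the pairing $\chi\leftrightarrow\chi^{-1}$ on characters of order dividing $D$. For a self-inverse orbit ($[\chi]=[\chi^{-1}]$), one has $g(\chi,\psi_{\F_{p^k}})^2=\chi(-1)p^k$; for a non-self-inverse pair $[\chi]\ne[\chi^{-1}]$ (necessarily of the same size $k$), the joint contribution to $\alpha^2$ factors through $g(\chi,\psi_{\F_{p^k}})g(\chi^{-1},\psi_{\F_{p^k}})=\chi(-1)p^k$. In both cases, upon squaring $\alpha$ the Gauss sums collapse into powers of $p$ times signs; summing the orbit sizes $k$ gives the total dimension $D-1$, and one obtains $\alpha^2=\epsilon\cdot p^{D-1}$ with $\epsilon\in\{\pm 1\}$ the product of $\chi(-1)$ over self-inverse orbits. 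Hence $\beta^2=\epsilon$ and $\beta^4=1$, which in particular implies $\beta^{4p}=1$. The main obstacle is the careful Frobenius-orbit and sign bookkeeping in this step; the stated bound $4p$ is weaker than the sharper $\det(\sG_{p,D})^{\otimes 4}$ triviality that the argument actually yields, but the looser form is clean enough to state and suffices for all later use.
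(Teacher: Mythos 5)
Your route is genuinely different from the paper's, and most of it is sound: the reduction of $\det(\sF_{p,D})$ at $t=0$ to $H^1_c(\A^1,\sL_{\psi(x^D)})$, the excision sequence from $\G_m\subset\A^1$, the orbit-by-orbit determinant $(-1)^k g(\chi,\psi_{\F_{p^k}})$ for the cyclic-shift Frobenius action, and the collapse of $\alpha^2$ to $\pm p^{D-1}$ via $g(\chi)g(\chi^{-1})=\chi(-1)p^k$ (using $g(\chi^p)=g(\chi)$ on self-inverse orbits) are all correct. The paper instead avoids any local or Gauss-sum analysis: it notes that over even-degree extensions the determinant values lie in $\Q(\mu_p)$, are $\ell$-adic units away from $p$ and of archimedean absolute value $1$, and then uses the product formula and the uniqueness of the $p$-adic place of $\Q(\mu_p)$ to conclude they are roots of unity, hence in $\mu_{2p}$; this is uniform in $(p,D)$ and gives the stated exponent $4p$, while your computation (where it applies) pins down the sharper statement that $\beta^2=\pm1$.

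The genuine gap is in your first step, the geometric triviality of the determinant. The lemma is stated for every $D\ge 3$ prime to $p$, with no hypothesis on $D-1$. Your stationary-phase decomposition of the $I(\infty)$-representation after pulling back along $s\mapsto s^{D-1}$, into $\bigoplus_{\zeta^{D-1}=1}\sL_{\psi(c_\zeta)}\otimes\chi_\zeta$ indexed by nondegenerate critical points $x=\zeta s$, is only valid when $p\nmid D-1$: if $p\mid D-1$ there are not $D-1$ distinct $(D-1)$-th roots of unity, the critical points of $x^D+tx$ are degenerate (the second derivative $D(D-1)x^{D-2}$ vanishes identically), the Kummer cover is no longer prime to $p$, and the local structure at $\infty$ is not of that shape; also the descent ``pullback tame $\Rightarrow$ tame'' uses tameness of the cover. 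Consequently your claimed sharpening ($\det(\sG_{p,D})^{\otimes 4}$ trivial) is not established in those cases. The statement itself can still be rescued along your lines without full geometric triviality: since $\det(\sF_{p,D})$ is rank one and lisse on $\A^1$ with $\infty$-slope at most $D/(D-1)<2$, and the Swan conductor of a rank-one character is an integer, the geometric determinant is $\sL_{\psi(ct)}$ for some $c$ (possibly $0$); its value at the Frobenius of $t=0$ is still $\beta=\alpha/\sqrt{p}^{\,D-1}$, so your Gauss-sum computation gives $\beta^4=1$, and the extra factor $p$ in the exponent $4p$ kills the possible Artin--Schreier part. As written, though, the unconditional assertion of geometric triviality (and the resulting exponent $4$) is a gap for $p\mid D-1$.
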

\begin{proof}It suffices to show that after extension of scalars from $\F_p$ to its quadratic extension $\F_{p^2}$, the $2p$'th power is trivial, i.e.,
that if $k/\F_p$ is a finite extension of even degree $2d$, then the determinant takes values in $\mu_{2p}$.To see this, note that the twisting factor $\sqrt{p}^{2d}=p^d \in \Q$, so this determinant has values in $\Q(\mu_p)$ which are units at all finite places of residue characteristic not $p$ (use the $\ell$-adic incarnations) and which have absolute value $1$ at all archimedean places of $\Q(\mu_p)$. Because there is a unique $p$-adic place of $\Q(\mu_p)$, the product formula shows that the determinant has values which are also units at $p$, and hence are roots of unity in $\Q(\mu_p)$, i.e., they are $2p$'th roots of unity.
\end{proof}

When we view the local system $\sG_{p,D}$ as a representation
$$\rho_{\sG_{p,D}}: \pi_1(\A^1/\F_p) \rightarrow GL(D-1,\overline{\Q_\ell}),$$
the Zariski closure of the image of $\pi_1(\A^1/\F_p) $ is defined to be the arithmetic monodromy group $G_{arith}$. The Zariski closure of the image of the normal subgroup $\pi_1^{geom}:=\pi_1(\A^1/\overline{\F_p}) $ is defined to be the geometric monodromy group $G_{geom}$. 
Thus we have inclusions of algebraic groups over $\overline{\Q_\ell}$: $$G_{geom}\lhd G_{arith} \subset GL(D-1).$$

Applying \cite[8.14.5, $(1) \iff (2) \iff (6)$]{Ka-ESDE} in the particular case of $\sG_{p,D}$, we have
\begin{prop}\label{arithfin}The following conditions are equivalent.
\begin{itemize}
\item [(1)] $\sG_{p,D}$ has finite $G_{geom}$.
\item [(1bis)] $\sF_{p,D}$ has finite $G_{geom}$.
\item [(2)] $\sG_{p,D}$ has finite $G_{arith}$.
\item [(3)] All traces of $\sG_{p,D}$ are algebraic integers.
\end{itemize}
\end{prop}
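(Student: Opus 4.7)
The proposition specializes \cite[8.14.5]{Ka-ESDE} to $\sG_{p,D}$; that theorem applies because $\sG_{p,D}$ is geometrically irreducible (being the Fourier Transform of the rank-one system $\sL_{\psi(x^D)}$), pure of weight zero, and has arithmetically finite determinant by the preceding Lemma. I sketch how the four implications separate.

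The equivalence of (1) and (1bis) is formal: $\sG_{p,D}$ differs from $\sF_{p,D}$ only by a scalar half-Tate twist, a character of $\Gal(\overline{\F_p}/\F_p)$ that is trivial on $\pi_1(\A^1/\overline{\F_p})$; the two systems therefore define the same representation of $\pi_1^{geom}$ and so have the same geometric monodromy group. The implication (2)$\Rightarrow$(1) is immediate from $G_{geom}\lhd G_{arith}$, and (2)$\Rightarrow$(3) is immediate, since any element of finite order in $\GL(D-1,\overline{\Q_\ell})$ has root-of-unity eigenvalues, whose sum is an algebraic integer.

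The implication (1)$\Rightarrow$(2) is where the preceding Lemma enters essentially. Conjugation by Frobenius gives a homomorphism $G_{arith}/G_{geom}\to\mathrm{Out}(G_{geom})$ whose target is finite (as $G_{geom}$ is), so its kernel $K$ has finite index; the preimage $H\subset G_{arith}$ of $K$ is then contained in $G_{geom}\cdot Z_{\GL(D-1)}(G_{geom})$, and by geometric irreducibility and Schur's lemma this centralizer reduces to the scalar group $\Gm$. For any $h=g\lambda\in H$ with $g\in G_{geom}$ and $\lambda\in\Gm$, the identity $\det(h)=\det(g)\,\lambda^{D-1}$, together with the Lemma (which confines $\det(h)$ to $\mu_{4p}$) and the finiteness of $G_{geom}$ (which bounds $\det(g)$), confines $\lambda$ to a fixed finite set of roots of unity, making $H$ finite; hence $G_{arith}$ is finite.

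The substantive implication is (3)$\Rightarrow$(2). Weight-zero purity forces every Frobenius eigenvalue to have complex absolute value $1$ under each embedding of $\overline{\Q}\subset\overline{\Q_\ell}$, and the integrality of traces together with the finite-order determinant forces the whole characteristic polynomial of each Frobenius to have algebraic integer coefficients; so all eigenvalues are algebraic integers on the unit circle, and Kronecker's theorem makes them roots of unity. Their degrees over $\Q$ are uniformly bounded (the trace values live in a fixed cyclotomic field containing $\sqrt{p}$ and $\mu_p$, and eigenvalues are roots of a degree $D-1$ polynomial over it), so they have bounded order, and $G_{arith}$ thus contains a Zariski dense (by Chebotarev) set of elements whose orders divide a fixed $N$. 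The principal obstacle, which \cite[8.14.5]{Ka-ESDE} supplies, is the passage from this density statement to outright finiteness: the set $\{g:g^N=1\}$ is Zariski closed in $\GL(D-1)$, so $G_{arith}$ itself has exponent $N$, and a closed subgroup of $\GL(D-1,\overline{\Q_\ell})$ of bounded exponent in characteristic zero must be finite, since its identity component could otherwise contain either a nontrivial unipotent or a non-torsion semisimple element of infinite order.
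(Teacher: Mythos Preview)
Your proposal is correct and follows the same approach as the paper: both simply invoke \cite[8.14.5]{Ka-ESDE}, with the paper doing nothing more than citing that result, while you additionally sketch its content. Your sketch is sound (the equivalence $(1)\Leftrightarrow(1\text{bis})$, the easy implications, and the use of the preceding Lemma for $(1)\Rightarrow(2)$ are all as they should be), and where the argument for $(3)\Rightarrow(2)$ gets delicate you correctly defer to the cited reference.
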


When $D \ge 3$ is odd (and prime to $p$), the local system  $\sF_{p,D}$ is symplectically self-dual. As shown in \cite[Proposition 4 and Corollary 6]{R-L}, its $G_{geom}$ is either finite or it is the full symplectic group  ${\rm Sp}(D-1)$. When $D \ge 3$ is even (and prime to $p$),  the same reference shows that $G_{geom}$ is either finite or is $SL(D-1)$. The proof of \cite[Proposition 4 and Lemma 5]{R-L} also shows 
that when $D$ is not of the form $1+q$ for $q$ a power of $p$, then $\sF_{p,D}$  is not induced (i.e., the given representation of its 
$G_{geom}$ is not induced). Indeed, by the result  \cite[11.1]{Such} of  \v{S}uch, if the representation were induced, it would be Artin-Schreier induced, and that is what is ruled out when $D$ is not of the form $1+q$. [When $D=1+q$, then $G_{geom}$ is, by Pink \cite[20.3]{Ka-RLSA}, a finite $p$-group, and (hence) the representation is induced.]

When $D \ge 3$ is prime to $p$, the trace function of $\sG_{p,D} $ takes values in $\Z[\mu_p]$. If moreover $p$ is $1$ mod $4$, then
we can choose either quadratic Gauss sum, a quantity which itself lies in $\Z[\mu_p]$, as our $\sqrt{p}$, and hence all traces of $\sG_{p,D}$ lie in $\Z[\mu_p][1/p]$.  If $p$ is not $1$ mod $4$, this remains true for traces of the pullback of $\sG_{p,D}$ to $\A^1/\F_{p^2}$. In either case, the traces of $\sG_{p,D}$ in question are algebraic integers if and only if they all have
${\rm ord}_p \ge 0$.

\begin{rmk}\label{tracefld}When $D \ge 3$ is prime to $p$ and {\bf odd}, then the traces of $\sF_{p,D} $ lie in the real subfield $\Q(\mu_p)^+$.
If in addition $p$ is $1$ mod $4$, then either quadratic gauss sum is $\pm \sqrt{p}$ also lies in this field, and hence  $\sG_{p,D} $ has traces in 
 $\Q(\mu_p)^+$.
 \end{rmk}
 
Results of Kubert, explained in  \cite[4.1,4.2,4.3]{Ka-RLSA} and discovered independently in \cite[Cor. 4, Cor. 5]{R-L}, show that $G_{geom}$ and $G_{arith}$ for $\sG_{p,D}$ are finite when $q$ is a power of $p$ and $D$ is any of
$$q+1,\ \ \frac{q+1}{2} {\rm \ with \ odd \  }q,  \frac{q^n+1}{q+1} {\rm \ with \ odd \ }n.$$
Let us call these the Kubert cases.
In \cite[17.1, 17.2]{Ka-RLSA} and \cite[3.4]{Ka-Ti-RLSMFUG} their $G_{geom}$ groups are  determined for all odd $q$.

Both authors have given numerical criteria for $\sG_{p,D}$ to have finite $G_{geom}$ and $G_{arith}$, cf. \cite[first paragraph after 5.1]{Ka-RLSA} and \cite[Thm. 1]{R-L}. The second author did extensive computer experiments to find other $(p,D)$ than the Kubert cases for which $\sG_{p,D}$ seemed to have finite $G_{geom}$ (i.e., where many many traces were all algebraic integers). For primes $p \le 11$ and $D \le 10^6$, there was only one non-Kubert candidate, the case $p=5,\  D=7$.

In the first part of this paper, we prove that $\sF_{5,7}$ has finite $G_{geom}$ (and hence, by Proposition \ref{arithfin},  that $\sG_{5,7}$ has finite $G_{geom}$ and finite $G_{arith}$). In the second part, we show that 
 $G_{geom}=G_{arith}= 2.J_2$ in one of its two six-dimensional irreducible representations. These two representations are symplectic. Their character values lie in $\Z[\sqrt{5}]$ and are Galois-conjugates of each other. As Guralnick and Tiep point out \cite[Table 1]{G-T}, the group $2.J_2$, sitting inside $Sp(6,\C)$, has the exotic property that it has the same moments $M_n$ (dimension of the space of invariants in the $n$'th tensor power of the given six-dimensional representation) as the ambient group ${\rm Sp}(6,\C)$ for $M_1$ through $M_{11}$; one needs $M_{12}$ to distinguish them.
 
 It is not clear whether there ``should" be infinitely many $(p,D)$ other than the Kubert cases for which $\sG_{p,D}$ has finite $G_{arith}$, or finitely many, or just this one $(5,7)$ case. Much remains to be done.

\section{Finiteness of the monodromy}

In this section we will prove that the sheaf $\FF_{5,7}$ has finite geometric monodromy. We will do so by applying the numerical criterion proven in \cite[Theorem 1]{R-L}, which we recall here. For a prime $p$ and an integer $x \ge 0$, we define 
$$[x]_{p,\infty}:={\rm \ the\  sum \ of \ the\  digits \ of\  the \ }p{\rm -adic\  expansion\ of\ }x,$$ 
using the usual digits $\{0,1,2,...,p-1\}$.

For every $r\geq 1$ we define $[x]_{p,r}=[x]_{p,\infty}$ if $1\le x \le p^r-1$, and we extend the definition to every integer $x$ by imposing that $[x]_{p,r}=[y]_{p,r}$ if $x\equiv y$ (mod $p^r-1$).[Thus we are using $\{1,2,...,p^r-1\}$ as representatives of $\Z/(p^r - 1)\Z$.] 

From \cite[Thm. 1]{R-L}, we have

\begin{thm}\label{numcrit}
The sheaf $\FF_{p,d}$ has finite geometric monodromy if and only if the inequality
 $$
 [dx]_{p,r}\leq [x]_{p,r}+\frac{r(p-1)}{2}
 $$
holds for every $r\geq 1$ and every integer $0<x<p^r$.
\end{thm}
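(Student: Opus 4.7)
The strategy rests on Proposition~\ref{arithfin}: $\sF_{p,d}$ has finite $G_{geom}$ if and only if every trace of $\sG_{p,d}$ is an algebraic integer, which (since the traces lie in $\Z[\mu_p][1/p]$ or its compositum with $\Q(\sqrt{p})$) amounts to non-negativity of the $p$-adic valuation at every Frobenius. My plan is to expand the trace over $\F_q$ ($q=p^r$) into Gauss sums, read off $p$-adic valuations using Stickelberger's theorem, and translate the condition into the stated inequality.

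Concretely, writing the trace at $t\in\F_q^*$ and applying Fourier inversion on $\F_q^*$ yields
$$-\sum_{x\in\F_q}\psi_q(x^d+tx)\;=\;-1\;-\;\frac{1}{q-1}\sum_{\chi\colon\chi^d\neq\triv}\chi^d(t)\,g_r(\chi)\,g_r(\chi^{-d})\;+\;\frac{1}{q-1}\sum_{\chi\colon\chi^d=\triv}g_r(\chi),$$
where $g_r(\chi):=\sum_{u\in\F_q^*}\chi(u)\psi_q(u)$. Parametrising $\chi=\omega^x$ by the Teichm\"uller character $\omega$ and $x\in\{1,\dots,q-1\}$, Stickelberger's theorem gives $v_p(g_r(\omega^a))=[a]_{p,r}/(p-1)$, and using $[-a]_{p,r}=r(p-1)-[a]_{p,r}$ the $\omega^x$-term has $p$-adic valuation
$$\frac{[x]_{p,r}+[-dx]_{p,r}}{p-1}\;=\;r\;+\;\frac{[x]_{p,r}-[dx]_{p,r}}{p-1}.$$
Dividing by $\sqrt{q}=p^{r/2}$ to normalise to $\sG_{p,d}$, this term is $p$-adically integral precisely when $[dx]_{p,r}\le[x]_{p,r}+r(p-1)/2$; the $\chi^d=\triv$ contributions obey the same inequality once one sets $[0]_{p,r}=r(p-1)$ by the convention of the theorem.

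Sufficiency is then immediate: if the inequality holds for all $r\ge 1$ and all $0<x<p^r$, every Stickelberger summand in the expansion is $p$-adically integral, hence so is every trace, and Proposition~\ref{arithfin} gives finite $G_{geom}$. For necessity, a hypothetical violating pair $(r_0,x_0)$ must be shown to force a Frobenius trace over some $\F_{p^r}$ to have negative $p$-adic valuation. The natural device is to isolate individual Stickelberger terms by averaging the trace against a multiplicative character: for a character $\eta$ of $\F_q^*$, orthogonality collapses $\sum_{t\in\F_q^*}T(t)\eta^{-1}(t)$ to a sum $\sum_{\chi^d=\eta}g_r(\chi)g_r(\chi^{-d})$ supported on a single fibre of $\chi\mapsto\chi^d$, on which the base-$p$ digit data $([x]_{p,r},[dx]_{p,r})$ typically separate the terms by $p$-adic valuation, so the extremal summand survives.

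The main obstacle is precisely this non-cancellation step: one must rule out that two characters $\chi_1,\chi_2$ in the same $\chi\mapsto\chi^d$ fibre yield Gauss-sum products of equal minimal $p$-adic valuation whose Teichm\"uller units destructively cancel. The argument uses that the fibres have size $\gcd(d,q-1)$ and that, on enlarging $r$ (replacing $r_0$ by $r_0 s$) and invoking the Hasse--Davenport relation to transport the bad digit datum to higher level, any accidental cancellation at level $r_0$ can be broken. Once an uncancelled extremal term is exhibited, the isolating trace has negative $p$-adic valuation, contradicting finite monodromy and completing the necessity.
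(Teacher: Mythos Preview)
The paper does not prove Theorem~\ref{numcrit}; it is simply quoted from \cite[Thm.~1]{R-L}, so there is no in-paper argument to compare against. Your overall strategy---reduce to $p$-integrality of traces via Proposition~\ref{arithfin}, expand in Gauss sums, and read off valuations by Stickelberger---is the correct framework, and your sufficiency direction is essentially complete: if the inequality holds for every $(r,x)$, each term in the multiplicative Fourier expansion is $p$-integral after the half-Tate twist, hence so is every trace.

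The necessity direction has a real gap, which you flag but do not close. Averaging the trace of $\sF_{p,d}$ against $\eta=\omega^{dx_0}$ does not isolate the single term for $x_0$; it returns the whole fibre $\{x:dx\equiv dx_0\pmod{p^r-1}\}$ of size $\gcd(d,p^r-1)$, and you must exclude cancellation among those Gauss-sum products. Your Hasse--Davenport remedy is only asserted: lifting from level $r_0$ to $r_0 s$ sends each old term $g_{r_0}(\omega^x)$ to $(-1)^{s-1}g_{r_0}(\omega^x)^s$, but the fibre at level $r_0 s$ has $\gcd(d,p^{r_0 s}-1)\ge\gcd(d,p^{r_0}-1)$ elements, so genuinely new terms appear and it is not clear the procedure terminates.

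A clean way to avoid this is to run the identical argument on the descended sheaf $\sH_{p,d}$ of Section~3 instead of on $\sF_{p,d}$. A direct computation (substitute $u=x^d/t$) gives, for $\rho\neq\triv$ with $\rho^d\neq\triv$, the Mellin transform
\[
\sum_{t\in\F_q^\times}\rho(t)\,\mathrm{Tr}(\mathrm{Frob}_t\mid\sH_{p,d})\;=\;-\,g_r(\bar\rho)\,g_r(\rho^d),
\]
a \emph{single} product with no sum and hence no cancellation. Writing $\rho=\omega^y$, its $p$-adic valuation is exactly $r+([y]_{p,r}-[dy]_{p,r})/(p-1)$, so $p$-integrality of these Mellin transforms of $\sH_{p,d}(1/2)$ is \emph{equivalent} to the stated inequality for every $(r,y)$. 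Since $\sF_{p,d}|_{\G_m}=[d]^\star\sH_{p,d}$ with $[d]$ finite \'etale of degree prime to $p$, the two sheaves have commensurable $G_{geom}$, and finiteness for one is equivalent to finiteness for the other. This closes necessity without any cancellation analysis.
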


Let us enumerate some basic properties of the functions $[-]_{p,\infty}$ and $[-]_{p,r}$. 

\begin{prop}
 For strictly positive integers $x$ and $y$, and for $r\in\NN\cup\{\infty\}$, we have:
 \begin{enumerate}
 \item $[x+y]_{p,r}\leq [x]_{p,r}+[y]_{p,r}$
 \item $[x]_{p,r}\leq [x]_{p,\infty}$
 \item $[px]_{p,r}=[x]_{p,r}$
 \end{enumerate}
\end{prop}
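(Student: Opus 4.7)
The plan is to prove the three assertions in the order (3), then (2), then (1), reducing each to standard manipulations of base-$p$ digits. For (3), the case $r = \infty$ is immediate, since the base-$p$ expansion of $px$ is that of $x$ with a trailing zero appended, leaving the digit sum unchanged. For finite $r$, I will take the representative $x' \in \{1,\dots,p^r-1\}$ of $x$ modulo $p^r-1$ and write it as an $r$-tuple of base-$p$ digits $(d_{r-1},\dots,d_0)$, padding with leading zeros if necessary. Then $px' = d_{r-1}p^r + \sum_{i=0}^{r-2} d_i p^{i+1}$, and the relation $p^r \equiv 1 \pmod{p^r-1}$ shows that the representative of $px$ modulo $p^r-1$ is the cyclic shift $(d_{r-2},\dots,d_0,d_{r-1})$, which has the same digit sum as $x'$. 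A small check confirms that this shifted representative still lies in $\{1,\dots,p^r-1\}$ (it equals zero only when $x' = 0$, which is excluded).

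For (2), I will argue by strong induction on $x$. The base case $1\le x\le p^r-1$ is tautological, as then $[x]_{p,r} = [x]_{p,\infty}$ by definition. For $x\ge p^r$, write $x = ap^r+b$ with $0\le b<p^r$ and $a\ge 1$, so the disjoint-digit decomposition gives $[x]_{p,\infty} = [a]_{p,\infty}+[b]_{p,\infty}$. Since $p^r\equiv 1\pmod{p^r-1}$, we have $x\equiv a+b\pmod{p^r-1}$ with $a+b<x$. If $b>0$, the inductive hypothesis applied to $a+b$, together with the classical subadditivity $[a+b]_{p,\infty}\le [a]_{p,\infty}+[b]_{p,\infty}$ (the $r=\infty$ case of (1), itself proved by the standard observation that each carry in column addition reduces the digit sum by $p-1$), yields $[x]_{p,r}=[a+b]_{p,r}\le [a+b]_{p,\infty}\le [x]_{p,\infty}$. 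If $b=0$, then $x\equiv a\pmod{p^r-1}$ with $a<x$, and induction directly gives $[x]_{p,r}=[a]_{p,r}\le[a]_{p,\infty}=[x]_{p,\infty}$.

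Finally, (1) for finite $r$ will follow by replacing $x,y$ with their representatives $x',y'\in\{1,\dots,p^r-1\}$ and chaining $[x+y]_{p,r}=[x'+y']_{p,r}\le [x'+y']_{p,\infty}\le [x']_{p,\infty}+[y']_{p,\infty}=[x]_{p,r}+[y]_{p,r}$, where the first inequality uses (2) and the second uses the $r=\infty$ case of (1). The main obstacle, such as it is, lies in the choice of residue system $\{1,\dots,p^r-1\}$ rather than $\{0,\dots,p^r-2\}$: this would potentially cause trouble in (2) for $x$ a positive multiple of $p^r-1$, but the strong induction dispatches that case automatically, without ever needing a direct lower bound on $[c(p^r-1)]_{p,\infty}$.
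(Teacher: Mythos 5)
Your proof is correct and follows essentially the same route as the paper's: subadditivity of the digit sum at $r=\infty$, an induction on $x$ for (2) that uses $p^r\equiv 1 \pmod{p^r-1}$ to pass to a smaller congruent integer, reduction to representatives in $\{1,\dots,p^r-1\}$ for (1) with finite $r$, and the cyclic digit shift for (3). The only differences are cosmetic (you split $x=ap^r+b$ where the paper subtracts the largest power $p^s$ and adds back $p^{s-r}$, and you justify subadditivity via carries rather than minimal decompositions into powers of $p$).
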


\begin{proof}
 We first prove (1) for $r=\infty$. Note that $[x]_{p,\infty}$ is the minimal number of terms in any decomposition of $x$ as a sum of powers of $p$. By taking the sum of the $p$-adic expansions of $x$ and $y$ we see that $x+y$ can be written as a sum of $[x]_{p,\infty}+[y]_{p,\infty}$ powers of $p$, and the inequality follows.
 
 For (2) we proceed by induction on $x$: for $0<x<p^r$ it is obvious by definition. If $x \ge p^r$, let $s$ be the largest integer such that $p^s\leq x$. Then $s\geq r$ and $[x-p^s]_{p,\infty}=[x]_{p,\infty}-1$. Since $x\equiv x-p^s+p^{s-r}$ (mod $p^r-1$), while $x > x-p^s+p^{s-r} > 0$,
 we have, by induction on $x$ and (1)$_\infty$,
 $$
 [x]_{p,r}=[x-p^s+p^{s-r}]_{p,r}\leq[x-p^s+p^{s-r}]_{p,\infty}\leq[x-p^s]_{p,\infty}+[p^{s-r}]_{p,\infty}=
 $$
 $$
 =[x]_{p,\infty}-1+1=[x]_{p,\infty}.
 $$

 In order to prove (1) for finite $r$ we can assume that $x,y<p^r$. Then by (2) and (1)$_\infty$, we have
 $$
 [x+y]_{p,r}\leq [x+y]_{p,\infty}\leq [x]_{p,\infty}+[y]_{p,\infty}=[x]_{p,r}+[y]_{p,r}.
 $$
 
 Finally, (3) is obvious for $r=\infty$. For finite $r$, note that if $x=a_{r-1}p^{r-1}+\cdots+a_1p+a_0$ is the $p$-adic expansion of $x<p^r$, then $px\equiv a_{r-2}p^{r-1}+\cdots+a_1p^2+a_0p+a_{r-1}$ (mod $p^r-1$), so $[px]_{p,r}=[x]_{p,r}=a_{r-1}+\cdots+a_1+a_0$.
 
\end{proof}

We now fix $p=5$ and $d=7$.

\begin{lem}\label{l1}
 Let $r$ be a positive integer and $0 \le x<5^r$ an integer such that $x\not\equiv 2 \mod 5$. Then $[7x]_{5,\infty}\leq[x]_{5,\infty}+2r$.
\end{lem}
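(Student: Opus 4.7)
The plan is to proceed by induction on $r$, using subadditivity $[A+B]_{5,\infty}\le [A]_{5,\infty}+[B]_{5,\infty}$ and shift-invariance $[5n]_{5,\infty}=[n]_{5,\infty}$ (parts (1) and (3) of the previous proposition, applied with $r=\infty$), combined with a case split on the leading base-$5$ digit $a_{r-1}$ of $x$.

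For the base case $r=1$, I would check the four permitted residues directly: $7\cdot 0 = 0$, $7\cdot 1 = 12_5$, $7\cdot 3 = 41_5$, and $7\cdot 4 = 103_5$ have digit sums $0,3,5,4$, each at most $[x]_{5,\infty}+2$. For the inductive step with $r\ge 2$, decompose $x=a_{r-1}\cdot 5^{r-1}+y$ with $y<5^{r-1}$. Since $a_{r-1}\cdot 5^{r-1}$ is divisible by $5$, we have $y\equiv x\not\equiv 2\pmod 5$, so the induction hypothesis gives $[7y]_{5,\infty}\le [y]_{5,\infty}+2(r-1)$. Combined with $[7x]_{5,\infty}\le [7a_{r-1}]_{5,\infty}+[7y]_{5,\infty}$ (subadditivity, then shift-invariance on the high term), the step reduces to bounding the single-digit excess $[7a_{r-1}]_{5,\infty}-a_{r-1}$. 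The enumeration
\[
[7z]_{5,\infty}-z \;=\; 0,\,2,\,4,\,2,\,0 \qquad\text{for } z=0,1,2,3,4
\]
shows that this excess is at most $2$ whenever $a_{r-1}\in\{0,1,3,4\}$, yielding $[7x]_{5,\infty}\le(a_{r-1}+2)+[y]_{5,\infty}+2(r-1) = [x]_{5,\infty}+2r$ and closing the induction in this sub-case.

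The remaining, and main, obstacle is $a_{r-1}=2$: here the one-digit excess is $4$, which is too large for the naive split. My workaround is to peel off the top \emph{two} digits at once, writing $x=w\cdot 5^{r-2}+y'$ with $w=10+a_{r-2}\in\{10,11,12,13,14\}$ and $y'<5^{r-2}$ (when $r=2$ this is simply $y'=0$). A direct computation gives $7w\in\{240_5,302_5,314_5,331_5,343_5\}$ with digit sums $6,5,8,7,10$, against $[w]_{5,\infty}=2,3,4,5,6$, so $[7w]_{5,\infty}-[w]_{5,\infty}\le 4$ in every case---precisely the budget for two digits. Since $y'\equiv x\not\equiv 2\pmod 5$, the induction hypothesis still applies to $y'$, and subadditivity together with shift-invariance yields
\[
[7x]_{5,\infty} \;\le\; [7w]_{5,\infty}+[7y']_{5,\infty} \;\le\; \bigl([w]_{5,\infty}+4\bigr)+\bigl([y']_{5,\infty}+2(r-2)\bigr) \;=\; [x]_{5,\infty}+2r,
\]
finishing the proof.
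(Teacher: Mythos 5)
Your proof is correct, and it takes a somewhat different route from the paper's, though both are inductions on $r$ driven by subadditivity and the invariance $[5n]_{5,\infty}=[n]_{5,\infty}$. The paper splits $x=5^sy+z$ at \emph{some interior position} $s$ whose digit is $\neq 2$ and applies the induction hypothesis to \emph{both} pieces; since the hypothesis ($\not\equiv 2 \bmod 5$) must hold for each piece, this forces a separate case when every non-units digit equals $2$, which the paper settles for $r\ge 3$ by the explicit identity $7\cdot(22\ldots220)_5=(322\ldots2140)_5$, together with hand-checked base cases $r=1,2$. You instead always peel from the top: one digit when the leading digit is $\neq 2$, two digits when it is $2$, applying the induction hypothesis only to the lower block, which inherits the congruence condition automatically because it shares the units digit of $x$. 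The whole difficulty is then absorbed by the finite tables of excesses $[7z]_{5,\infty}-z\le 2$ for $z\in\{0,1,3,4\}$ and $[7w]_{5,\infty}-[w]_{5,\infty}\le 4$ for $w\in\{10,\ldots,14\}$ (two digits of budget for a two-digit chunk). Your version buys uniformity: a single base case $r=1$, no ``all digits equal $2$'' special case, and no need for $r\ge 3$ in the step; the paper's version buys a shorter case analysis at the top level, concentrating the work in one explicit multiplication. Both are complete and correct.
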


\begin{proof} We proceed by induction on $r$. For $r=1$ and $r= 2$ one checks it by hand.

Now let $r\geq 3$ and $0 \le x<5^r$ with $x\not\equiv 2 \mod 5$. 
If $0 \le x<5^{r-1}$ the stronger inequality $[7x]_\infty\leq [x]_\infty+2r-2$
holds by induction, so we may assume that $5^{r-1}\leq x<5^r$. Consider the $5$-adic expansion of $x$, which has $r$ digits, the last one being $\neq 2$ by hypothesis. We distinguish two cases:

\emph{Case 1:} The constant term is not $2$, and there is some other digit $\neq 2$, say the one multiplying $5^s$, for some $s$ with $r >s >0$. Write $x=5^sy+z$, with $0 \le z<5^s$, $0 \le y<5^{r-s}$ (that is, split the first $r-s$ and the last $s$ $5$-adic digits of $x$). Then by induction on $r$ we get
$$
[7x]_{5,\infty}=[7\cdot 5^s y+7z]_{5,\infty}\leq [7\cdot 5^sy]_{5,\infty}+[7z]_{5,\infty}=
$$
$$
=[7y]_{5,\infty}+[7z]_{5,\infty}\leq [y]_{5,\infty} + 2(r-s)+[z]_{5,\infty} +2s=[x]_{5,\infty}+2r.
$$

\emph{Case 2:} All other digits are $=2$, that is, $x=(222...22a)_5$ with $a\in\{0,1,3,4\}$. Note that $7\cdot(222...220)_5=(322...2140)_5$ (where there are two fewer $2$'s on the right hand side). Then
$$
[7x]_{5,\infty}=[7\cdot(22...220)_5 + 7a]_{5,\infty}\leq[7\cdot(22...220)_5]_{5,\infty}+[7a]_{5,\infty}=
$$
$$
=[(322...2140)_5]_{5,\infty}+[7a]_{5,\infty}
=2(r+1)+[7a]_{5,\infty}\leq 2(r+1)+a+2 =
$$
$$
= 2(r-1)+a+6\le 2(r-1)+a+2r = |(222...22a)_5]_{5,\infty}+2r=[x]_{5,\infty}+2r.
$$
\end{proof}

\begin{rmk}Although it will not be used, it follows from the lemma that for every $r \ge 1$ and for every integer $x$ with $0 \le x <5^r$, we have
$$[7x]_{5,\infty}\leq[x]_{5,\infty}+2r+2.$$
Indeed, for $0 \le x <5^r $, the quantity $5x$ is $<5^{r+1}$ and is not $2$ mod $5$. So by the lemma applied to $5x$ with $r+1$, we have
$$[7\cdot 5x]_{5,\infty}\leq[5x]_{5,\infty}+2r+2.$$
But $[7\cdot 5x]_{5,\infty}=[7x]_{5,\infty}$ and $[5x]_{5,\infty}=[x]_{5,\infty}$.
\end{rmk}
\begin{thm}\label{finmono} The geometric monodromy of $\FF_{5,7}$ is finite.
\end{thm}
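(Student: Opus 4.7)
The plan is to invoke Theorem \ref{numcrit} with $p=5$ and $d=7$. Since $r(p-1)/2 = 2r$, the task reduces to verifying
$$[7x]_{5,r} \leq [x]_{5,r} + 2r$$
for every $r \geq 1$ and every integer $0 < x < 5^r$.

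First I would dispatch the case $x \not\equiv 2 \pmod 5$ using Lemma \ref{l1} directly: the lemma gives $[7x]_{5,\infty} \leq [x]_{5,\infty} + 2r$; combined with the equality $[x]_{5,r} = [x]_{5,\infty}$ (which holds because $0 < x < 5^r$) and the inequality $[7x]_{5,r} \leq [7x]_{5,\infty}$ from part (2) of the preceding proposition, this yields the desired bound at once.

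Next I would reduce the remaining case $x \equiv 2 \pmod 5$ to the previous one via part (3) of the proposition. Multiplication by $5$ modulo $5^r - 1$ cyclically permutes the $r$ base-$5$ digits of $x$, so it preserves both $[x]_{5,r}$ and (since $\gcd(7,5)=1$) also $[7x]_{5,r}$. Writing $x = \sum_{i=0}^{r-1} a_i 5^i$ with $a_0 = 2$, if some $a_j \neq 2$ I set $y := 5^{r-j} x \bmod (5^r - 1)$, taken in $\{1, \ldots, 5^r - 1\}$; then $y$ has last base-$5$ digit $a_j \neq 2$, while $[y]_{5,r} = [x]_{5,r}$ and $[7y]_{5,r} = [7x]_{5,r}$. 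Applying the first case to $y$ now gives the inequality for $x$.

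The only remaining configuration is when every digit $a_i$ equals $2$, i.e., $x = (5^r - 1)/2$. Here a direct calculation $7x - x = 6x = 3(5^r - 1) \equiv 0 \pmod{5^r - 1}$ shows $[7x]_{5,r} = [x]_{5,r}$, so the inequality holds trivially. The substantive work of the proof has already been done in Lemma \ref{l1}; what remains is exactly the bookkeeping reduction sketched above, whose only subtle point is identifying and handling this single ``all $2$s'' edge case.
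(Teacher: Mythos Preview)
Your proposal is correct and follows essentially the same route as the paper: reduce to the case where the last base-$5$ digit of $x$ is $\neq 2$ via the cyclic action of multiplication by $5$ modulo $5^r-1$, then apply Lemma~\ref{l1} together with properties (2) and (3) of the proposition. The only cosmetic difference is your treatment of the all-$2$'s case $x=(5^r-1)/2$: you observe directly that $7x\equiv x\pmod{5^r-1}$ so $[7x]_{5,r}=[x]_{5,r}$, whereas the paper notes that $[x]_{5,r}+2r=4r$ is already the maximum possible value of $[-]_{5,r}$.
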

\begin{proof} By Theorem \ref{numcrit}, we need to show that $[7x]_{5,r}\leq[x]_{5,r}+2r$ for $r\geq 1$ and $0<x<5^r$.
 
If $x=\frac{5^r-1}{2}$, then $x=(22...22)_5$, so $[x]_{5,r}+2r=4r$ and the inequality is clear, since $4r$ is an absolute upper bound for the function $[-]_{5,r}$.

Otherwise, some $5$-adic digit of $x$ is $\neq 2$. Since multiplying $x$ by $5$ cyclically permutes the digits of $x$ modulo $5^r-1$ and does not change the values of $[x]_{5,r}$ or of $[7x]_{5,r}$, we may assume that the last digit of $x$ is $\neq 2$. Then
$$
[7x]_{5,r}\leq [7x]_{5,\infty}\leq[x]_{5,\infty}+2r=[x]_{5,r}+2r
$$
by Lemma \ref{l1}.
\end{proof}

\section{Determination of the monodromy groups}
We first give a general descent construction, valid for general $\sF_{p,D}$ with $D \ge 3$ prime to $p$.
On $\G_m/\F_p$, consider the rank $D-1$ local system $\sH_{p,D}$ whose trace function, for $k/\F_p$ a finite extension, and $t \in \G_m(k)=k^\times$, is
$$t \mapsto -\sum_{x \in k}\psi_k(x^D/t + x).$$
The pullback of  $\sH_{p,D}$ by the $D$'th power map $[D]$ is (the restriction to $\G_m$ of) the local system $\sF_{p,D}$: simply repace $t$ by $t^D$ and make the change of variable $x \mapsto tx$ inside the $\psi$.

View  $\sF_{p,D}$ as the Fourier Transform $FT([D]^\star(\sL_{\psi(x)})$. Then we see from \cite[9.3.2]{Ka-ESDE}, cf. also \cite[2.1 (1)]{Ka-RLSA}, that this $\sH_{p,D}$ is geometrically isomorphic to the Kloosterman sheaf formed with all the nontrivial multiplicative characters of order dividing $D$.

\begin{rmk}\label{traceflddesc}Exactly as in Remark \ref{tracefld}, when $D \ge 3$ is odd and prime to $p$, and $p$ is $1$ mod $4$, the field of traces of $\sH_{p,D}$ lies in $\Q(\mu_p)^+$.
\end{rmk}
This descent has all of its $I(\infty)$-slopes equal to $1/(D-1)$ (either from its identification with a Kloosterman sheaf of rank $D-1$, 
or because its $[D]$-pullback, $\sF_{p,D}$, has all its  $I(\infty)$-slopes equal to $D/(D-1)$).

Either from the fact that its pullback is geometrically irreducible, or from the Kloosterman description, or just from the fact of having all $I(\infty)$-slopes $1/(D-1)$, we see that $\sH_{p,D}$ is geometrically irreducible.

\begin{lem}\label{tensorindec}Let $d \ge 2$, $\ell \neq p$, and $M$ a $d$-dimensional continuous $\overline{\Q_\ell}$-representation $\rho_M$ of $I(\infty)$ all of whose slopes are $1/d$. Suppose that $d$ is not divisible by $p^2$. Then there does not exist a factorization of $d$
as $d=ab$ with $a,b$ both $< d $, together with algebraic groups $G_1 \subset SL(a,\overline{\Q_\ell})$ and $G_2 \subset SL(b,\overline{\Q_\ell})$ such that 
$$Image(\rho_M) \subset {\rm the \ image \ }G_1\otimes G_2 \ {\rm of\ }G_1\times G_2 \ {\rm \ in\ }SL(ab,\overline{\Q_\ell}).$$
\end{lem}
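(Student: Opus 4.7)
The plan is a proof by contradiction: suppose such a factorization $d=ab$ with $a,b\ge 2$ exists. The strategy combines two ingredients: (i) an irreducible continuous $\overline{\Q_\ell}$-representation of $I(\infty)$ with unique slope $r/s$ in lowest terms has dimension divisible by $s$; and (ii) the hypothesis $p^2\nmid d$, which will let me upgrade the set-theoretic inclusion $\rho_M(I(\infty))\subset G_1\otimes G_2$ to a genuine factorization $\rho_M\cong\rho_1\otimes\rho_2$ of $I(\infty)$-representations with $\rho_i:I(\infty)\to G_i\subset SL(n_i)$ (where $n_1=a$, $n_2=b$). Concretely, the central isogeny $G_1\times G_2\twoheadrightarrow G_1\otimes G_2$ has kernel contained in $\mu_{\gcd(a,b)}$; since $p^2\nmid ab$, after relabelling I may assume $p\nmid a$, hence $p\nmid\gcd(a,b)$, so this kernel has order prime to $p$. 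Since $I(\infty)$ is the absolute Galois group of a local field of characteristic $p$ with separably closed residue field, its cohomological dimension is $\le 1$ with prime-to-$p$ coefficients, the obstruction in $H^2(I(\infty),\mu_{\gcd(a,b)})$ vanishes, and the lift exists; in particular $\det(\rho_i)=1$.

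Next I would run a break-decomposition analysis. Let $\sigma_i$ be the largest slope of $\rho_i|_{P(\infty)}$. If $\sigma_1\ne\sigma_2$, the largest slope of $\rho_1\otimes\rho_2$ is $\max(\sigma_1,\sigma_2)$, which must equal $1/d$; then whichever factor $\rho_i$ realizes this maximum contains an irreducible $I(\infty)$-summand of slope $1/d$, which by (i) has dimension divisible by $d$, contradicting $n_i<d$. The same contradiction rules out $\sigma_1=\sigma_2=1/d$.

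The main obstacle is the equal-slope case $\sigma_1=\sigma_2=\sigma>1/d$, which I would address using leading terms on the graded pieces of the upper-numbering ramification filtration. Decompose $\rho_i|_{P(\infty)}=\bigoplus_j\alpha_j^{(i)}$ into characters of $P(\infty)$. If some $\alpha_k^{(2)}$ had slope $<\sigma$, its product with a slope-$\sigma$ character of $\rho_1$ would still have slope $\sigma>1/d$, contradicting the slopes of $\rho_M$; so every $\alpha_j^{(i)}$ has slope exactly $\sigma$. In the $\Fp$-vector space $V_\sigma:=\Hom_{\Fp}(P(\infty)^{(\sigma)}/P(\infty)^{(\sigma+)},\Fp)$, multiplication of slope-$\sigma$ characters corresponds to addition of leading terms, and a product has slope $<\sigma$ precisely when the leading terms cancel. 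Requiring every $\alpha_j^{(1)}\alpha_k^{(2)}$ to have slope $1/d$ then forces $\overline{\alpha_j^{(1)}}=\ell\in V_\sigma$ independent of $j$ (and $\overline{\alpha_k^{(2)}}=-\ell$). Summing the leading terms of the $a$ characters of $\rho_1|_{P(\infty)}$ yields the leading term of $\det(\rho_1)|_{P(\infty)}$, namely $a\ell$; but $\det\rho_1=1$ forces $a\ell=0$ in $V_\sigma$, and since $p\nmid a$ I conclude $\ell=0$, contradicting the fact that a slope-$\sigma$ character has nonzero leading term.
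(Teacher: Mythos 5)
Your first half is exactly the paper's: the kernel $\sK$ of $G_1\times G_2\to G_1\otimes G_2$ sits in $\mu_{\gcd(a,b)}$, hence has order prime to $p$ because $p^2\nmid d$, and $H^2(I(\infty),\sK)=0$ gives the lift $\rho_{a,b}=(\rho_1,\rho_2)$. But your endgame has a genuine gap in the one case that matters, $\sigma_1=\sigma_2=\sigma>1/d$. There you decompose $\rho_i|_{P(\infty)}$ into characters of $P(\infty)$; this is unjustified and in general false. The image of the pro-$p$ group $P(\infty)$ under $\rho_i$ is a finite $p$-group, but it need not be abelian: the hypothesis only excludes $p^2\mid d$, so one of the factors may have dimension divisible by $p$ (e.g. $d=2p$, $a=2$, $b=p$), and then $\rho_i|_{P(\infty)}$ can have an irreducible constituent of dimension $p$ (a wildly induced piece), with no character decomposition available. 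On top of that, the ``leading term'' formalism you invoke --- that slope-$\sigma$ characters of $P(\infty)$ have well-defined nonzero classes in $\Hom_{\Fp}\bigl(P(\infty)^{(\sigma)}/P(\infty)^{(\sigma+)},\Fp\bigr)$, additive under multiplication, with cancellation detecting a drop of slope --- is asserted without proof; it is standard for characters of $I(\infty)$ via Artin--Schreier--Witt theory, but for characters of $P(\infty)$ itself (which need not extend) it requires an argument, including that the graded piece is killed by $p$. So as written the proof does not establish the lemma in the stated generality.

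The irony is that your own lifting step already contains the fix, and it is the paper's punchline: since $\rho_M$ kills $I(\infty)^{(1/d)+}$ (all slopes are $1/d$), the lift sends this subgroup into $\sK$; but $I(\infty)^{(1/d)+}$ is pro-$p$ and $\sK$ has order prime to $p$, so the lift kills $I(\infty)^{(1/d)+}$ outright. Hence \emph{each} $\rho_i$ has all slopes $\le 1/d$, so $\mathrm{Swan}(\rho_i)\le n_i/d<1$, and integrality of Swan conductors forces both $\rho_1$ and $\rho_2$ to be tame; then $M=\rho_1\otimes\rho_2$ is tame, contradicting that its slopes are $1/d>0$. This disposes of all cases at once (your case $\sigma>1/d$ simply cannot occur), uses only the Hasse--Arf integrality you already quoted, and avoids both the character decomposition and the leading-term machinery.
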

\begin{proof}We argue by contradiction. The map $G_1\times G_2 \rightarrow G_1\otimes G_2$ has finite kernel, $\sK$, which is a subgroup of 
the group $\mu_{\gcd(a,b)}$ (this being the kernel of $SL(a)\times SL(b) \rightarrow SL(ab)$). Because $I(\infty)$ has cohomological dimension one, the group $H^2(I(\infty), \sK)=0$, and therefore there exists a lift of $\rho_M$ to a homomorphism
$$\rho_{a,b}:I(\infty) \rightarrow G_1\times G_2,$$
compare \cite[7.2.5]{Ka-ESDE}.
Because the kernel $\sK$ has order prime to $p$, the upper numbering subgroup $I(\infty)^{\frac{1}{d} +}$, which acts trivially on $M$,
lies in the kernel of $\rho_{a,b}$ (simply because $I(\infty)^{\frac{1}{d} +}$ is a pro-$p$ group which maps to the finite group $\sK$ which has order prime to $p$, cf. \cite[7.1.4]{Ka-ESDE}).
Then the homomorphisms
$$\rho_a:= pr_1  \circ \rho_{a,b}: I(\infty) \rightarrow G_1$$
and 
$$\rho_b:= pr_2 \circ \rho_{a,b}: I(\infty) \rightarrow G_2$$
are  each trivial on  $I(\infty)^{\frac{1}{d} +}$, i.e., each has all slopes $\le 1/d$. Therefore their Swan conductors have $Swan(\rho_a)\le a/d < 1$ and $Swan(\rho_b)\le b/d < 1$. But Swan conductors are nonnegative integers. Therefore both $\rho_a$ and $\rho_b$ have $Swan=0$, i.e., both are tame. But then $M$ is tame, contradiction.
\end{proof}

When $D \ge 3$ is odd and prime to $p$, the half-Tate twist $\sH_{p,D}(1/2)$ is symplectically selfdual.

We now turn our attention to the particular case of $\sH_{5,7}$ and its half-Tate twist $\sH_{5,7}(1/2)$. We know from
Theorem \ref{finmono} that its $G_{geom}$ (and hence also its $G_{arith}$, by Proposition \ref{arithfin}) is a finite irreducible subgroup of $Sp(6, \overline{\Q_\ell})$. By Remark \ref{traceflddesc}, the field of traces of  $\sH_{5,7}(1/2)$ lies in $\Q(\mu_5)^+ =\Q(\sqrt{5})$. Computing the trace at $t=1 \in \F_5^\times$, we see that its field of traces is in fact $\Q(\sqrt{5})$ (and not just $\Q$).

\begin{lem}The group $G_{geom} \subset Sp(6, \overline{\Q_\ell})$ is primitive, i.e.,  the given six-dimensional representation is not induced.
\end{lem}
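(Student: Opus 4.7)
The plan is to reduce primitivity of $V$ on $\sH_{5,7}(1/2)$ to the known primitivity of $\sF_{5,7}$, which was recalled in the introduction via Šuch's theorem \cite[11.1]{Such}: since $D=7$ is not of the form $1+5^n$, the $G_{geom}$-representation on $\sF_{5,7}$ is not induced. Recall that $\sF_{5,7}$ is the pullback $[7]^*\sH_{5,7}$ on $\G_m$, so $G^{\sF}:=G_{geom}(\sF_{5,7})$ sits inside $G^{\sH}:=G_{geom}(\sH_{5,7})$ as a subgroup of index dividing $7$.

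Suppose for contradiction that $V = \mathrm{Ind}_H^{G^{\sH}} W$ for some proper subgroup $H\subsetneq G^{\sH}$, necessarily of index $a\in\{2,3,6\}$ with $\dim W = 6/a$. Mackey's decomposition of $V|_{G^{\sF}}$ reads
\[
V|_{G^{\sF}} \;=\; \bigoplus_{g\in G^{\sF}\backslash G^{\sH}/H} \mathrm{Ind}_{G^{\sF}\cap gHg^{-1}}^{G^{\sF}} \bigl({}^gW|_{G^{\sF}\cap gHg^{-1}}\bigr).
\]
Since $V|_{G^{\sF}}$ is the representation of $\sF_{5,7}$, which is geometrically irreducible, and since a direct sum of nonzero induced representations is never irreducible unless there is a single summand, the Mackey sum collapses to exactly one nonzero term. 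Equivalently, $G^{\sF}$ acts transitively on $G^{\sH}/H$, and (taking $g=1$)
\[
V|_{G^{\sF}} \;=\; \mathrm{Ind}_{G^{\sF}\cap H}^{G^{\sF}}\bigl(W|_{G^{\sF}\cap H}\bigr),\qquad [G^{\sF}:G^{\sF}\cap H]=a\ge 2.
\]
The induction is proper: were $G^{\sF}\subseteq H$, then $V|_{G^{\sF}}$ would have dimension $\dim W = 6/a<6$, contradicting $\dim V|_{G^{\sF}}=6$.

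Thus $\sF_{5,7}$ would itself be induced from a proper subgroup of $G^{\sF}$, contradicting the primitivity of $\sF_{5,7}$ recalled above. This shows that $V$ is primitive. The only subtlety is justifying that the Mackey sum reduces to exactly one summand and that the resulting induction is proper, both of which follow immediately from irreducibility of $\sF_{5,7}$ together with the dimension count $\dim V|_{G^{\sF}}=6$.
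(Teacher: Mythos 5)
Your argument is correct and coincides with the paper's second proof of this lemma: both reduce primitivity of $\sH_{5,7}$ to the fact, via \v{S}uch \cite[11.1]{Such}, that its $[7]$-pullback $\sF_{5,7}$ cannot be induced (an induced $\sF_{5,7}$ would be Artin--Schreier induced, forcing $5 \mid 6$), using that restriction to the index-dividing-$7$ subgroup $G_{geom}(\sF_{5,7})$ of $G_{geom}(\sH_{5,7})$ preserves imprimitivity. Your Mackey decomposition is just a careful justification of the step the paper dispatches with the parenthetical ``a system of imprimitivity for a group remains one for any subgroup'' (the paper also offers an independent first proof via Pink's theorem that an induced Kloosterman sheaf must be Kummer induced, which you do not need).
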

\begin{proof}By Pink's theorem \cite[Lemma 12]{Ka-MG}, if a Kloosterman sheaf is (geometrically) induced, its list of characters is Kummer induced. So for our Kloosterman sheaf, formed with the nontrivial characters of order $7$, being induced would imply that, for some divisor $n \ge 2$ of $6$, its characters are all the $n$'th roots of some collection of $6/n$ characters. In particular, some ratio of
distinct characters of order $7$ would be a character of order dividing $n$, for some divisor $n$ of $6$, which is not the case: all such ratios have order $7$.

Another proof is to observe that if $\sH_{5,7}$ were induced, then its pullback  $\sF_{5,7}$ would be induced (a system of imprimitivity for a group remains one for any subgroup). But by \cite[11.1]{Such}, if $\sF_{5,7}$ were induced, it would be Artin-Schreier induced, so its rank, $6$, would be a multiple of $p=5$.
\end{proof}

\begin{thm}The local system $\sH_{5,7}(1/2)$ has $G_{geom}=G_{arith} =2.J_2$.
\end{thm}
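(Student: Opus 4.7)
The plan is to combine the structural constraints on $G_{\mathrm{geom}}$ already in hand with a classification of finite primitive irreducible subgroups of $Sp(6,\overline{\Q_\ell})$, and then separately rule out any arithmetic enlargement. First I would assemble the constraints: by Theorem \ref{finmono} and Proposition \ref{arithfin}, the group $G_{\mathrm{geom}}$ is a finite subgroup of $Sp(6,\overline{\Q_\ell})$; by the preceding lemma, this six-dimensional representation is primitive; and by Lemma \ref{tensorindec} applied to the $I(\infty)$-representation, whose slopes are all $1/6$ and for which $6$ is not divisible by $p^2=25$, the representation is also tensor-indecomposable (a global tensor decomposition of the image of $G_{\mathrm{geom}}$ would restrict to one on inertia). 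Finally, Remark \ref{traceflddesc} combined with a direct trace computation at $t=1\in\F_5^\times$ pins the field of traces to be exactly $\Q(\sqrt{5})$, not $\Q$.

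Next I would identify $G_{\mathrm{geom}}$ as $2.J_2$. Aschbacher's classification, combined with the Hiss--Malle list of finite quasi-simple primitive tensor-indecomposable irreducible subgroups of $Sp(6,\C)$, reduces the candidates to a short explicit list; among these, the character field $\Q(\sqrt{5})$ singles out precisely the pair of Galois-conjugate six-dimensional representations of $2.J_2$. As a more self-contained cross-check, I would compute the low moments $M_k(\sH_{5,7}(1/2))=M_k(G_{\mathrm{geom}})$ via Kloosterman-style trace sums, using the special property from Guralnick--Tiep \cite[Table 1]{G-T} that $2.J_2$ and $Sp(6,\C)$ agree on $M_1,\dots,M_{11}$ but differ at $M_{12}$: matching the latter value with the sheaf computation, combined with the qualitative constraints above, confirms the identification.

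Finally I would upgrade to $G_{\mathrm{arith}}=G_{\mathrm{geom}}$. Since $G_{\mathrm{geom}}\triangleleft G_{\mathrm{arith}}\subset Sp(6,\overline{\Q_\ell})$, one has $G_{\mathrm{arith}}\subset N_{Sp(6)}(2.J_2)$. Schur's lemma gives $C_{GL(6)}(2.J_2)=\overline{\Q_\ell}^\times$, and the nontrivial outer automorphism of $2.J_2$ swaps its two Galois-conjugate six-dimensional representations and so cannot be realized by conjugation inside $GL(6)$. Hence $N_{GL(6)}(2.J_2)=\overline{\Q_\ell}^\times\cdot 2.J_2$, and intersecting with $Sp(6)$ (whose only scalars are $\pm 1$, already in $2.J_2$) gives $N_{Sp(6)}(2.J_2)=2.J_2$, yielding $G_{\mathrm{arith}}=G_{\mathrm{geom}}=2.J_2$. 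The hard part is the middle step: either the classification route or the explicit moment route requires genuinely excluding every other finite primitive tensor-indecomposable symplectic subgroup of $Sp(6,\C)$ whose trace field lies in a quadratic extension of $\Q$, and that is where the real work lies.
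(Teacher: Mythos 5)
There is a genuine gap in your middle step: you use the fact that the trace field is \emph{exactly} $\Q(\sqrt{5})$ to single out $2.J_2$ as $G_{geom}$, but that fact is a statement about traces of Frobenius elements, i.e.\ about $G_{arith}$, not about $G_{geom}$. For $G_{geom}$ the trace computation at $t=1$ gives only an upper bound (its character field is contained in that of $G_{arith}$, hence in $\Q(\sqrt{5})$), so after the primitivity/tensor-indecomposability/symplecticity reductions the candidates $SL(2,5)$ and $SU(3,3)$, whose six-dimensional symplectic representations have rational traces, are still alive for $G_{geom}$, and your proposal never excludes them. The paper's route avoids exactly this: it notes that $G_{arith}$ inherits primitivity, irreducibility and tensor-indecomposability from $G_{geom}$, applies Lindsey's classification (which, importantly, also covers the non-quasisimple case of a group containing a quasisimple subgroup of index two -- your appeal to a ``quasi-simple'' list glosses over this), uses the exact trace field to reduce $G_{arith}$ to $2.J_2$ or $G.2$ with $G\in\{SL(2,5),SU(3,3)\}$, kills the latter by the Atlas fact that those six-dimensional representations do not extend to symplectic (or even self-dual) representations of $G.2$, and only then descends: $G_{geom}\lhd G_{arith}=2.J_2$ with cyclic quotient forces $G_{geom}=G_{arith}$ since $2.J_2$ is perfect. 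If you want to keep your order of argument (identify $G_{geom}$ first), you must carry out the analogous normalizer/extension analysis for $SL(2,5)$ and $SU(3,3)$, not just for $2.J_2$.

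Two further comments. Your proposed $M_{12}$ ``cross-check'' is not a workable substitute: computing the twelfth moment of the local system exactly is a nontrivial cohomological computation (point counts over finitely many fields only approximate it), and agreement with the $M_{12}$ of $2.J_2$ would in any case not distinguish $2.J_2$ from the other finite candidates -- the Guralnick--Tiep observation is cited in the paper as a curiosity about $2.J_2$ versus $Sp(6,\C)$, not as an identification tool. On the other hand, your final step is a correct alternative to the paper's: granting $G_{geom}=2.J_2$, the computation $N_{Sp(6)}(2.J_2)=2.J_2$ (Schur's lemma plus the Atlas fact that the outer automorphism swaps the two Galois-conjugate six-dimensional representations, and that the only symplectic scalars $\pm1$ already lie in $2.J_2$) does give $G_{arith}=G_{geom}$, whereas the paper instead argues with the cyclic quotient $G_{arith}/G_{geom}$ and perfectness of $2.J_2$; but this alternative only helps once the identification of $G_{geom}$ has been made sound.
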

\begin{proof}
Our situation now is that we have a primitive (by Lemma \ref{tensorindec}) irreducible subgroup $G$ (the $G_{geom}$ for $\sH_{5,7}(1/2)$) in 
$Sp(6, \overline{\Q_\ell})$ such that the given six-dimensional representation is not contained in the tensor product of two lower dimensional representations of $G$. Therefore the larger finite group $G_{arith}$ is a fortiori itself primitive and irreducible inside $Sp(6)$.

We now appeal to the work \cite[\&3, Theorem]{Lind} of Lindsey, as stated in \cite[Theorem 3.1]{C-S}. This gives 
the list of irreducible primitive subgroups of $SL(6,\C)$. Those whose given six-dimensional representation is not contained in a nontrivial tensor product are either
\begin{itemize}
\item[(1)]$2.S_5$ or $S_7$.
\item[(2)]a quasisimple group.
\item[(3)] a group containing a quasisimple group of index two on which the representation remains irreducible.
\end{itemize}

The first case is subsumed by the third, as $2.S_5=2.A_5.2$ contains $2.A_5$, and $S_7$ contains $A_7$.
The quasisimple groups in question are
$$2.A_5 =SL(2,5), 3.A_6, 6.A_6, A_7, 3.A_7, 6.A_7, PSL(2,7), SL(2,7), SL(2,11), $$
and
$$SL(2,13), PSp(4,3) \cong PSU(4,2), SU(3,3), 6.PSU(4,3), 2.J_2, 6.PSL(3,4).$$

Of these, those which lie as an index two subgroup of a larger group inside $SL(6,\C)$ are 
$$SL(2,5), 3.A_6, A_7, PSL(2,7), PSp(4,3),$$
$$SU(3,3),  6.PSU(4,3),6.PSL(3,4).$$

Of the listed quasisimple groups, the only ones with irreducible symplectic representations of degree six are 
$$SL(2,5),\ SL(2,7),\ SL(2,13),\ SU(3,3), \ 2.J_2.$$. 

Of these, the only ones whose field of character values (for any of its six-dimensional irreducible symplectic representations) lies in $\Q(\sqrt{5})$ are $SL(2,5)$,  $SU(3,3)$ and $2.J_2$. For $SL(2,5)$ and $SU(3,3)$, the field of traces is $\Q$; for $2.J_2$ it is  $\Q(\sqrt{5})$.
So the only possibilities for $G_{arith}$ other than $2.J_2$ are the groups $G.2$ for $G$ either $SL(2,5)$ or $SU(3,3)$. But for neither of these two groups does the given representation extend to a symplectic representation (or a selfdual one), as one checks by looking in the Atlas \cite{ATLAS}.

Therefore $G_{arith}$ for $\sH_{5,7}(1/2)$ must be $2.J_2$. As $G_{geom}$ is a normal subgroup of $G_{arith}$ with cyclic quotient (namely some finite quotient of $\Gal(\overline{\F_p}/\F_p)$), we must also have $G_{geom}=G_{arith} =2.J_2$. 
\end{proof}

\begin{cor} For the local system $\sG_{5,7}$, we have $G_{geom}=G_{arith} =2.J_2$.
\end{cor}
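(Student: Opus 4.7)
The approach is to deduce this directly from the theorem using the descent recalled at the start of Section~3: $[7]^\star \sH_{5,7} = \sF_{5,7}|_{\G_m}$, hence $[7]^\star(\sH_{5,7}(1/2)) = \sG_{5,7}|_{\G_m}$ as arithmetic local systems on $\G_m/\F_5$. Since $\A^1 \setminus \G_m$ is a single closed point, $\pi_1(\G_m) \twoheadrightarrow \pi_1(\A^1)$ both geometrically and arithmetically, so restricting $\sG_{5,7}$ to $\G_m$ changes neither $G_{geom}$ nor $G_{arith}$. It therefore suffices to analyze the monodromy of the $[7]$-pullback of $\sH_{5,7}(1/2)$.

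The map $[7]:\G_m\to\G_m$ is finite \'etale, and geometrically it is Galois with group $\mu_7$, so pullback by $[7]$ identifies $\pi_1^{geom}$ of the source with the kernel of the quotient map $\pi_1^{geom}(\G_m)\twoheadrightarrow\mu_7$. Composing with the monodromy representation $\sigma$ of $\sH_{5,7}(1/2)$, we see that $G_{geom}(\sG_{5,7})$ is a \emph{normal} subgroup of $\sigma(\pi_1^{geom}(\G_m))=2.J_2$ whose quotient injects into $\mu_7$ and therefore has order $1$ or $7$. The decisive group-theoretic input is that the only normal subgroups of $2.J_2$ are $\{1\}$, $Z(2.J_2)=\mu_2$, and $2.J_2$ itself, of indices $|2.J_2|$, $|J_2|$, and $1$; this follows from the simplicity of $J_2$ together with the perfectness of $2.J_2$ (so there is no index-$2$ quotient either). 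None of these indices equals $7$, so $G_{geom}(\sG_{5,7})=2.J_2$.

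For the arithmetic group, $G_{arith}(\sG_{5,7})\supseteq G_{geom}(\sG_{5,7})=2.J_2$, while on the other hand $G_{arith}(\sG_{5,7})=G_{arith}([7]^\star(\sH_{5,7}(1/2)))\subseteq G_{arith}(\sH_{5,7}(1/2))=2.J_2$ because pullback by a finite \'etale cover can only shrink $G_{arith}$ (the representation of $\pi_1$ of the cover factors through a subgroup of $\pi_1$ of the base). Hence $G_{arith}(\sG_{5,7})=2.J_2$ as well. There is essentially no obstacle: the corollary is a formal consequence of the theorem together with a routine check of the normal subgroup structure of $2.J_2$; the only thing that needs care is the $\pi_1$-level functoriality under the prime-to-$p$ cover $[7]$, which is standard.
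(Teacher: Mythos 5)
Your proof is correct and follows essentially the same route as the paper: pass to $\G_m$, identify $\sG_{5,7}|_{\G_m}$ as the $[7]$-pullback of $\sH_{5,7}(1/2)$, and note that $2.J_2$ has no proper (normal) subgroup of index dividing $7$, which you justify via the simplicity of $J_2$ and perfectness of $2.J_2$. Your handling of $G_{arith}$ by squeezing it between $G_{geom}=2.J_2$ and $G_{arith}(\sH_{5,7}(1/2))=2.J_2$ is a minor, clean variant of the paper's one-line argument, not a different method.
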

\begin{proof}Neither $G_{geom}$ nor $G_{arith}$ changes when we pass from $\A^1$ to the dense open set $\G_m$. Restricted to $\G_m$, 
$\sG_{5,7}$ is the $[7]$ pullback of $\sH_{5,7}(1/2)$. This pullback replaces the  $G_{geom}$ and $G_{arith}$ of $\sH_{5,7}(1/2)$ by normal subgroups of themselves of index dividing $7$. But $2.J_2$ has no such proper subgroups.
\end{proof}

\section{Appendix: Relation of $[x]_{p,r}$ to Kubert's $V$ function}
We denote by $(\Q/\Z)_{{\rm prime \ to\ }p}$ the subgroup of $\Q/\Z$ consisting of those elements whose order is prime to $p$.
We denote by $\Q_p^{n.r.}$ the fraction field of the Witt vectors of $\overline{\F_p}$. For $\F_{q}$ a finite extension of $\F_p$, we have
the Teichmuller character
$$Teich_{\F_q}: \F_{q}^\times \cong \mu_{q-1}( \Q_p^{n.r.}),$$
whose reduction mod $p$ is the identity map on $\F_{q}^\times $.
For an integer $d$, consider the Gauss sum over $\F_q$,
$$G(\psi_{\F_ q},Teich^{-d}) := \sum_{x \in \F_q^\times}\psi_{\F_q}(x)Teich^{-d}(x).$$
If we write $q=p^r$, then by Stickelberger's theorem,
$${\rm ord}_q(G(\psi_{\F_ q},Teich^{-d}) )=(1/r)\sum_{j=0}^{r-1}<p^j \frac{d}{p^r-1}>.$$
As explained in \cite[p. 206]{Ka-G2hyper}, standard properties of Gauss sums show that there is a unique function 
$$V:  (\Q/\Z)_{{\rm prime \ to\ }p} \rightarrow {\rm the\ real\  interval \ }[0,1)$$
such that for $q=p^r$ and $d$ an integer, we have
$$V\left(\frac{d}{p^r-1}\right)=(1/r)\sum_{j=0}^{r-1}<p^j \frac{d}{p^r-1}>.$$
As noted in \cite[line before Theorem 1]{R-L}, we thus have the identity
$$V\left(\frac{d}{p^r-1}\right) = \frac{1}{r(p-1)}[d]_{p,r}$$
{\bf provided} that $1 \le d \le p^r-2$ (i.e., provided that $\frac{d}{p^r-1}$ is nonzero in $(\Q/\Z)_{{\rm prime \ to\ }p}$). However, for $d=0$, $V(\frac{d}{p^r-1})=0$, while 
$$ \frac{1}{r(p-1)}[0]_{p,r}=1.$$

This ``reversal" of the values at $0$, together with the identity for Kubert's $V$ function
$$V(x) + V(-x) =1, \ {\rm \ for \ } x \neq 0,$$
means precisely that for any integer $d$ and any power $p^r$ of $p$, we have the identity
$$\frac{1}{r(p-1)}[d]_{p,r} =1 - V\left(\frac{-d}{p^r-1}\right).$$
With this identity, one sees easily that the criterion \cite[Theorem 1]{R-L} for $\sF_{p,D}$ to have finite geometric monodromy, namely
that
$$[Dx]_{p,r} \le [x]_{p,r} +r(p-1)/2$$
for all $r \ge 1$ and all integers $x$, is equivalent to the criterion \cite[first paragraph after 5.1]{Ka-RLSA}
that for all $y \in (\Q/\Z)_{{\rm prime \ to\ }p}$, we have
$$V(Dy) + 1/2 \ge V(y).$$

\end{document}